 \newlength{\baseunit}               % the basic unit length
\newtheorem{tm}{Theorem}
\newtheorem{lm}[tm]{Lemma}
\newtheorem{co}[tm]{Corollary}
\newtheorem{df}[tm]{Definition}
\newcommand{\hidden}[1]{\footnote{Hidden:  #1}}
\renewcommand{\hidden}[1]{}
\newcommand{\bbA}{\mathbf{A}}
\newcommand{\bbC}{\mathbf{C}}
\newcommand{\bbN}{\mathbf{N}}
\newcommand{\bbP}{\mathbf{P}}
\newcommand{\bbR}{\mathbf{R}}
\newcommand{\bbZ}{\mathbf{Z}}
\newcommand{\ShHom}{\mathscr{H}\kern -.5pt om}
\begin{document}
\pagestyle{plain}
\title{A classical proof that the algebraic homotopy class of a rational function is the  residue pairing }

\author{Jesse Leo Kass}

\address{Current: J.~L.~Kass, Dept.~of Mathematics, University of South Carolina, 1523 Greene Street, Columbia, SC 29208, United States of America}
\email{kassj@math.sc.edu}
\urladdr{http://people.math.sc.edu/kassj/}

\author{Kirsten Wickelgren}

\address{Current: K.~Wickelgren, School of Mathematics, Georgia Institute of Technology, 686 Cherry Street, Atlanta, GA 30332-0160}
\email{kwickelgren3@math.gatech.edu}
\urladdr{http://people.math.gatech.edu/~kwickelgren3/}

\subjclass[2010]{Primary 14F42; Secondary 14B05, 55M25.}

\date{\today}

\begin{abstract}
Cazanave has identified the  algebraic homotopy class of a rational function of $1$ variable with an explicit nondegenerate symmetric bilinear form.   Here we show that Hurwitz's proof of a classical result about real rational functions essentially gives an alternative proof of the stable part of  Cazanave's result. We also explain how this result can be interpreted in terms of the residue pairing and that this interpretation relates the result to the signature theorem of Eisenbud, Khimshiashvili, and Levine, showing that  Cazanave's result answers a question posed by Eisenbud for polynomial functions in $1$ variable.  Finally, we  announce results answering this question for functions in an arbitrary number of variables.
\end{abstract}
\maketitle

{\parskip=12pt % closing bracket is just before the bibliography blah

\section{Introduction}
In this paper we show that a classical proof of Hurwitz can be modified to give a new proof of  Cazanave's description of the degree of a rational function in $\bbA^1$-homotopy theory.   In \cite{hurwitz95}, Hurwitz computed the topological degree of a real rational function $f/g \in \operatorname{Frac} \bbR[x]$ where we assume  $f$ is monic and of degree $\mu>\deg(g)$.  The real rational function $f/g$ defines a self-map of the real projective line $\bbP^{1}_{\bbR}$, and the space $\bbP^{1}_{\bbR}(\bbR)$ of real points is homeomorphic to the $1$-sphere $S^1$, so the self-map  has a well-defined topological degree.  Hurwitz's theorem \cite[Section~3]{hurwitz95} is 

\begin{tm}[Hurwitz]  \label{Thm: HurwitzThm}
	The topological degree of $f/g \colon \bbP^{1}_{\bbR}(\bbR) \to \bbP^{1}_{\bbR}(\bbR)$ equals the signature of the real symmetric matrix 
\[
S(f/g)=
 \begin{pmatrix}
  s_{1} 		& s_{2} 		& \cdots 		&	s_{\mu-1}		& s_{\mu} \\
  s_{2} 		& s_{3} 		& \cdots 		&	s_{\mu}		& s_{\mu-1} \\
  \vdots  		& \vdots		& \ddots 		& \vdots			& \vdots   \\
%   s_{\mu-1}	& s_{\mu} & \cdots		&	s_{2 \mu - 3}		& s_{2 \mu - 2} \\
  s_{\mu} 	& s_{\mu+1} & \cdots 		&	s_{2 \mu-2}	& s_{2 \mu -1} 
 \end{pmatrix}
\]
for $s_1, s_2, \dots$ defined by $g(x)/f(x) = s_{1}/x + s_{2} /x^2 + s_{3}/x^3+\dots$.
\end{tm}

Cazanave proved an enrichment of Hurwitz's result in $\bbA^1$-homotopy theory.  The $\bbA^1$- or motivic degree of a rational function over an arbitrary field $k$ was constructed by Morel in celebrated work describing homotopy classes of self-maps of a motivic sphere.  The $\bbA^1$-degree of a rational function is valued in the Grothendieck--Witt group $\operatorname{GW}(k)$ of stable isomorphism classes of nondegenerate symmetric bilinear forms.  Morel's construction is indirect, but Cazanave proved the $\bbA^1$-degree has the following explicit description.

\begin{tm}[Cazanave] \label{Thm: Cazanave}
The $\bbA^1$-homotopical degree of $f/g \colon \bbP^{1}_{k} \to \bbP^{1}_{k}$ equals the class of 
\[
	\operatorname{B\acute{e}z}(f/g) :=\begin{pmatrix} 
								b_{1, 1}		&	\cdots	&	b_{1, \mu} \\
								\vdots		&	\ddots	&	\vdots	\\
								b_{\mu,1}	&	\cdots		&	b_{\mu, \mu}
											
	 \end{pmatrix} 
\]
for $b_{i,j}$ defined by 
\begin{equation} \label{Eqn: DefOfBezout}
	f(x) g(y) - f(y) g(x) = (x-y) \cdot \left( \sum_{i,j=1}^{\mu} b_{i, j} x^{i-1} y^{j-1} \right).
\end{equation}
\end{tm}
This is not explicitly stated in \cite{cazanave08, cazanave}, but it follows from \cite[Theorem~1.2, Corollary~3.10]{cazanave}, as we explain below. Cazanave also proved  results stronger than Theorem~\ref{Thm: Cazanave}, and we discuss them below as well.

The B\'{e}zout matrix $\operatorname{B\acute{e}z}(f/g)$ is conjugate to $S(f/g)$, so when $k=\bbR$, the real realization  (in the sense of \cite[Section~3.3]{morelvoevodsky1998}) of Cazanave's result is Hurwitz's result.  Cazanave's proof is, however, different from Hurwitz's proof, and one goal of this paper is to show that Hurwitz's proof can be modified to prove Theorem~\ref{Thm: Cazanave}.  

We present the modification of Hurwitz's proof in a way to make the result accessible to readers without a background in $\bbA^1$-homotopy theory.  In proving the result, we only make use of some basic properties of the degree in $\bbA^1$-homotopy theory that are straightforward analogues of familiar properties of the topological degree.  We recall the properties we use at the beginning of Section~\ref{Section: Classical Proof}, and a reader that knows no $\bbA^1$-homotopy theory but is willing to accept that these properties hold should be able to understand the proof.

To explain the idea of Hurwitz's proof and contrast it to Cazanve's, let us recall Cazanave's proof.  His proof makes fundamental use of ideas from homotopy theory.  There are two important sets of homotopy classes of self-maps of $\bbP^{1}_{k}$.  The first set is the set $[\bbP^{1}_{k}, \bbP^{1}_{k}]^{\bbA^1}$ of (stable pointed) $\bbA^{1}$-homotopy classes.  This set is constructed using model categories, and it naturally has the structure of a $\bbZ$-graded group.  Morel constructed the $\bbA^1$-degree as an isomorphism $\deg^{\bbA^1} \colon [\bbP^{1}_{k}, \bbP^{1}_{k}]^{\bbA^1} \to \operatorname{GW}(k)$ of graded groups.  

The second set of homotopy classes is the set $[\bbP^{1}_{k}, \bbP^{1}_{k}]^{\text{N}}$ of (pointed) naive homotopy classes.  This set is the quotient of the set of all self-maps of $\bbP^{1}_{k}$ by the equivalence relation defined by naively imitating the definition of homotopy in algebraic topology (with the unit interval replaced by the affine line).  Cazanave endows $[\bbP^{1}_{k}, \bbP^{1}_{k}]^{\text{N}}$ with the structure of a $\bbN$-graded monoid that makes the natural map $[\bbP^{1}_{k}, \bbP^{1}_{k}]^{\text{N}} \to [\bbP^{1}_{k}, \bbP^{1}_{k}]^{\bbA^1}$ into a monoid homomorphism. Theorem~\ref{Thm: Cazanave}  is then proven by showing that the rule sending $f/g$ to $\operatorname{B\acute{e}z}(f/g)$ defines a homomorphism $[\bbP^{1}_{k}, \bbP^{1}_{k}]^{\text{N}} \to \operatorname{GW}(k)$.  This proves the theorem as the rational functions $x/A$ with $A \in k^{\ast}$ generate  $[\bbP^{1}_{k}, \bbP^{1}_{k}]^{\text{N}}$, and for these functions, both  $\operatorname{B\acute{e}z}(f/g)$ and $\deg^{\bbA^1}(f/g)$ equal $\langle A \rangle$, the class of the bilinear form with Gram matrix  $\begin{pmatrix} A \end{pmatrix}$.

By contrast, Hurwitz's proof  uses matrix theory as follows.  The topological degree of $f/g$ equals the sum, over the real zeros of $f/g$, of the local topological degrees.  To identify this sum with the signature of $S(f/g)$, we pass to the complex numbers.  Over the complex numbers, $S(f/g)$ is conjugate to a block diagonal matrix $\operatorname{New}(f/g)$ with blocks indexed by the complex roots of $f(x)$.  The proof is  completed by computing the signature of $S(f/g)$ in terms of the blocks, computing that a pair of blocks corresponding to a complex conjugate pair of roots contributes $0$ to the signature and a block corresponding to a real root contributes the local degree at the root.  

We show that the same argument, suitably modified,  gives an independent proof that $\deg^{\bbA^1}(f/g)$ is the class of $S(f/g)$ or equivalently $\operatorname{B\acute{e}z}(f/g)$.  This argument, however, proves a result that is strictly weaker than the result proven by Cazanave.  The $\bbA^1$-degree of $f/g$ determines the \emph{stable} homotopy class of $f/g$, and  Cazanave proves that in fact $\operatorname{B\acute{e}z}(f/g)$ represents the \emph{unstable} homotopy class of $f/g$.  Recall  Morel proved that the unstable homotopy class of $f/g$ is determined by $\deg^{\bbA^1}(f/g)$ together with a scalar $d(f/g) \in k^{\ast}$ representing the class of the discriminant of $\deg^{\bbA^1}(f/g)$.  Cazanave proves that $d(f/g)$ equals the determinant of  $\operatorname{B\acute{e}z}(f/g)$.  This second result is not established by our modification of Hurwitz's proof, as we explain at the end of Section~\ref{Section: Classical Proof}.  We also do not study the monoid structure on $[\bbP^1_{k}, \bbP^{1}_{k}]^{\text{N}}$.  While the monoid structure plays a fundamental role in Cazanave's proof, it does not play a visible role in Hurwitz's proof. 

A secondary goal of this paper is to explain the relation of the work of Hurwitz and Cazanave to the beautiful signature theorem of  Eisenbud--Levine and Khimshiashvili.  The signature formula  identifies the local degree of a real polynomial function (possibly in many variables) as the signature of the residue pairing.  In Section~\ref{Section: Connection}, we explain  that the matrices $\operatorname{B\acute{e}z}(f/g)$ and $S(f/g)$ are Gram matrices for the residue pairing and that the signature formula generalizes   Hurwitz's theorem.  We announce the following theorem which generalizes the signature theorem and answers a question posed by Eisenbud in \cite{eisenbud78}:

\begin{tm}	[The main result  of \cite{wickelgren16}] \label{Thm: CompanionMainTheorem}
	The local $\bbA^1$-degree of $f \colon \bbA^{n}_{k} \to \bbA^{n}_{k}$ is the class  of the residue pairing $\beta_{0}(\operatorname{Res}_{0})$. 
\end{tm}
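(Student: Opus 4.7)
The plan is to prove the identification by reducing to the case of simple zeros via a deformation argument, and then running parallel decompositions on the homotopical side (the local $\bbA^1$-degree) and the algebraic side (the residue pairing), matching them term by term. This parallels the structure of the classical Eisenbud--Levine proof of the signature theorem, where the same kind of reduction is used to pass from the real topological degree to the signature of the Scheja--Storch pairing.

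First I would handle the case where $0$ is a \emph{simple} zero of $f$, i.e.\ the Jacobian $J(f)(0)$ is a unit. In this situation $Q_0(f) = k$, and the Scheja--Storch functional $\beta_0$ sends $1$ to $1/J(f)(0)$, so the residue pairing is the rank-one form $\langle J(f)(0) \rangle$. On the other hand, Morel's description of the $\bbA^1$-degree on a simple zero gives $\deg^{\bbA^1}_0(f) = \langle J(f)(0) \rangle$ as well. More generally, at an isolated simple zero $p$ with residue field $k(p)$, both sides equal $\operatorname{Tr}_{k(p)/k}\langle J(f)(p) \rangle$. This base case is the key calculation, and the Main Theorem of the present paper already covers it for $n=1$.

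For a general isolated zero at the origin, I would choose a deformation $f_t = f - t\cdot v$ with $v$ generic enough that, for $t$ in a punctured neighborhood of $0$ (after passing to a finite separable extension $k'/k$ if necessary), the zero at the origin splits into finitely many simple zeros $p_1,\dots,p_r$ inside an \'etale neighborhood $U$ of $0$. Homotopy invariance of the local $\bbA^1$-degree applied to the proper family $f_t|_U$, together with additivity under splitting zeros, would give
\[
  \deg^{\bbA^1}_0(f) \;=\; \sum_{i=1}^{r} \operatorname{Tr}_{k'(p_i)/k}\,\deg^{\bbA^1}_{p_i}(f_t) \quad \text{in } \GW(k).
\]
On the algebraic side, the Scheja--Storch construction applied to the flat family $\{Q(f_t)\}$ provides the parallel decomposition
\[
  w(\beta_0)(f) \;=\; \sum_{i=1}^{r} \operatorname{Tr}_{k'(p_i)/k}\, w(\beta_{p_i})(f_t) \quad \text{in } \GW(k).
\]
Term by term comparison with the simple-zero case then finishes the proof.

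The main obstacle I anticipate is the compatibility of the two decompositions. On the homotopical side, one must prove a clean local-to-global formula: if $f_t|_U \colon U \to \bbA^n_k$ is a family whose zero locus is finite over the base, then the local $\bbA^1$-degree at the unique zero of the special fiber equals the transferred sum of the local degrees at the zeros of the generic fiber. Proving this rigorously relies on $\bbA^1$-homotopy invariance combined with a purity/Nisnevich-localness argument for the degree, and on an explicit model of the local $\bbA^1$-degree that makes such a specialization manifest. On the algebraic side, the analogous statement for Scheja--Storch duality is classical but still requires identifying the functional $\beta_0$ on $Q_0(f)$ with the sum of the local Scheja--Storch functionals on the $Q_{p_i}(f_t)$ after specialization; this is a rigidity statement about the canonical trace under flat deformation. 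A secondary subtlety is descent: the splitting of the zero typically exists only over $k'$, so one must verify that both constructions commute with finite separable base change in order for the equality to descend to $\GW(k)$.
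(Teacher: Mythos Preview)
The paper does not contain a proof of this theorem. Theorem~\ref{Thm: CompanionMainTheorem} is explicitly stated as ``The main result of \cite{wickelgren15}'' and is only \emph{announced} here; the abstract says the authors ``announce work identifying the local $\bbA^1$-degree\dots with the residue pairing,'' and Section~\ref{Section: Connection} refers the reader to the companion paper for the argument. So there is no proof in this paper against which to compare your proposal.

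That said, your outline is a plausible strategy and is consistent with the hints the paper gives about the companion article (e.g.\ the ``finite determinacy result of \cite{wickelgren15}'' invoked in Lemma~\ref{Lemma: LemmaComputePower}, and the discussion of the degree formula \eqref{Eqn: DegreeFormula} and descent at the start of Section~\ref{Section: Cazanave}). The structure---establish the simple-zero case as $\langle J(f)(0)\rangle$ on both sides, then deform an arbitrary isolated zero into simple zeros and compare the two additivity/specialization formulas---is exactly the shape one expects. The obstacles you flag are the real ones: the paper itself stresses that a rigorous local-to-global formula for $\deg^{\bbA^1}$ (especially with descent over possibly inseparable extensions) is not available in the prior literature independently of Cazanave's work, and that making this precise is part of what \cite{wickelgren15} does. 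Your sketch would need to supply exactly that: a proof of homotopy invariance and additivity of the local $\bbA^1$-degree in families, and the matching rigidity statement for the Scheja--Storch form. Those are the substantive steps, not routine verifications, and the present paper deliberately defers them.
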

As we explain in Section~\ref{Section: Connection}, $\beta_{0}(\operatorname{Res}_{0})$ is a bilinear pairing on the localization of the algebra $Q_{0}(f/g) = k[x, g^{-1}]/(f g^{-1})$ at the ideal $(x)$.  The reader familiar with Cazanave's work may recall that he identifies $\operatorname{B\acute{e}z}(f/g)$ as a Gram matrix of a bilinear pairing under suitable hypotheses, but his bilinear pairing is not $\beta_{0}(\operatorname{Res}_{0})$. We explain the relation between the pairings in Section~\ref{Section: Connection}.

In this paper we will assume $\operatorname{char} k \ne 2$, primarily to simplify exposition.  Both Cazanave's theorem and Theorem~\ref{Thm: CompanionMainTheorem} remain valid in characteristic $2$, but this assumption simplifies the proof of  Theorem~\ref{Thm: NewCazanave}.  At one step in the proof, we reduce to the case where the preimage $(f/g)^{-1}(0)$ of the origin is \'{e}tale so that we can apply a formula (Equation~\eqref{Eqn: DegreeFormula} below) relating the global degree to the local degree, and the reduction makes use of the assumption $\operatorname{char} k \ne 2$ (specifically the consequence that a purely inseparable extension has odd degee). 

\section*{Conventions}

$k$ is a field of characteristic $\ne 2$.

A rational function is a nonconstant $k$-morphism  $f/g \colon \bbP^{1}_{k} \to \bbP^{1}_k$ satisfying $f/g(\infty)=\infty$. We always represent such a morphism by a fraction $f(x)/g(x) \in \operatorname{Frac} k[x]$ with $f$ monic, $f$ relatively prime to $g$, and $\deg(f) > \deg(g)$.  Observe that our rational functions are more properly called pointed rational functions, but we omit the term  ``pointed."

The Gram matrix of a symmetric bilinear form $\beta$ on a finite dimensional $k$-vector space $V$ with respect to a basis $e_1, e_2, \dots, e_{\mu}$ is the matrix $\begin{pmatrix} \beta(e_i, e_j) \end{pmatrix}$.

We write $\langle A \rangle \in \operatorname{GW}(k)$ for the class of the symmetric bilinear form with Gram matrix $\begin{pmatrix} A \end{pmatrix}$.

\section{A classical proof} \label{Section: Classical Proof}
Here we adapt Hurwitz's proof of Theorem~\ref{Thm: HurwitzThm} to prove Cazanave's result, Theorem~\ref{Thm: Cazanave}. The proof  is closely modeled on the Kre{\u\i}n--Na{\u\i}mark's exposition of Hurwitz's result in \cite{krein81}, and after giving the proof, we closely compare our argument with the argument in loc.~cit.  We then explain why Hurwitz's argument does not prove the stronger result that $\operatorname{B\acute{e}z}(f/g)$ represents the unstable homotopy class of $f/g$.

We begin by summarizing the basic properties of the $\bbA^1$-degree that we use.  The properties are all analogues of familiar properties of the usual  topological degree of a real rational function $f/g \colon \bbP^{1}_{\bbR} \to \bbP^{1}_{\bbR}$.  The reader unfamiliar with the degree map in $\bbA^1$-homotopy theory should be able to understand the proof of Theorem~\ref{Thm: NewCazanave} using just the properties we summarize below (up to Lemma~\ref{Lemma: Local Invariance}).

The degree map in $\bbA^1$-homotopy theory takes values in the the Grothendieck--Witt $\operatorname{GW}(k)$.  This is the group completion of the monoid of nondegenerate quadratic forms.  The theory supports both a global degree $\deg^{\bbA^1}(f/g)$ of a rational function $f/g$ and a local degree $\deg^{\bbA^1}_{r}(f/g)$ at a closed point $r \in \bbP^1_{k}$ with residue field $k$.  These degrees satisfy the following properties:
\begin{itemize}
	\item  	the degree of a composition is the product of the degrees, 
	\item 	the global degree can be expressed as a sum of traces of local degrees, 
	\item 	both the local and global degrees are invariant under naive $\bbA^1$-homotopies, 
	\item		the global degree is normalized so that $\deg^{\bbA^1}(x/A) =  \langle A \rangle$.
\end{itemize}
The expression of the global degree as a sum of local degrees is the following one.  Suppose $f/g \colon \bbP_{k}^1 \to \bbP_{k}^1$ is a rational function with the property that preimage of the origin is a union $\{ r_1, \dots, r_m \}$ of closed points with residue fields that are separable extensions of $k$.  Then the global $\bbA^1$-degree satisfies
\begin{equation} \label{Eqn: DegreeFormula}
		\deg^{\bbA^1}(f/g) = \operatorname{Tr}_{k[r_1]/k}( \deg_{r_1}^{\bbA^1}( f/g \otimes k[r_1] )) + \dots +\operatorname{Tr}_{k[r_m]/k}( \deg_{r_m}^{\bbA^1}( f/g \otimes k[r_m] )).
\end{equation}
Here $\operatorname{Tr} \colon \operatorname{GW}(L) \to \operatorname{GW}(k)$ is the trace function, which is defined by sending a bilinear pairing $ \beta \colon V \times V \to L$ to the composition $\operatorname{Tr}_{L/k} \circ \beta$ of $\beta$ with the field trace.   Under additional hypotheses, the formula is described in \cite[Section~2]{morel04}, \cite[page~1036]{morel06}, \cite[Section~5.2]{LevineMilan}.  The formula can be deduced from \cite[Proposition~13]{wickelgren16} and \cite[Proposition~32]{wickelgren17}.

The invariance property of the global degree we use is that 
\[
	\deg^{\bbA^1}(f_0/g_0) = \deg^{\bbA^1}(f_1/g_1)
\]
when there exists a morphism $H \colon \bbP^{1}_{k} \times_{k} \bbA^{1}_{k} \to \bbP^{1}_{k}$ such that $f_0/g_0(x) = H(x, 0)$, $f_1/g_1(x) = H(x,1)$, and $H(\infty,t) = \infty$ (i.e.~when there exist a naive $\bbA^1$-homotopy).  The local analogue is the following lemma.

\begin{lm} \label{Lemma: Local Invariance}
	Let $f_0/g_0, f_1/g_1 \colon \bbP^1_{k} \to \bbP^{1}_{k}$ be rational functions satisfying $f_{0}(0) = f_{1}(0)=0$.  If there exists a morphism $H \colon \bbP^{1}_{k} \times_{k} \bbA_{k}^{1} \to \bbP_{k}^{1}$ such that 
	\begin{gather*}
		H(x, 0)= f_{0}/g_{0}(x), H(x, 1) = f_{1}/g_{1}(x), \text{ and} \\
		\{ 0 \} \times_{k} \bbA^{1}_{k} \text{ is a connected component of $H^{-1}( \{ 0 \} \times \bbA^{1}_{k})$,}
	\end{gather*}
	then 
	\[
		\deg^{\bbA^1}_{0}(f_0/g_0) = \deg_{0}^{\bbA^1}(f_1/g_1).
	\]
\end{lm}
\begin{proof}
	The local degree $\deg^{\bbA^1}_{0}(f/g)$ of a rational function $f/g$ with an isolated zero at the origin is defined using the purity theorem \cite[Section~3, Lemma 2.1]{morelvoevodsky1998}.  If $U \subset \bbP^{1}_{k}$ is the complement of the zeros distinct from the origin, then $f/g \colon \bbP^{1}_{k} \to \bbP^{1}_{k}$ induces a morphism of quotient spaces
	\begin{equation} \label{Eqn: MapOnQuotients}
		\frac{U}{U-\{0\}} \to \frac{\bbP^{1}_{k}}{\bbP^{1}_{k}-\{0\}}.
	\end{equation}
	The purity theorem identifies both the target and source quotient space with $\bbP^{1}_{k}$, so the morphism \eqref{Eqn: MapOnQuotients} has a well-defined global $\bbA^{1}$-degree that is defined to be $\deg^{\bbA^1}_{0}(f/g)$.
	
	Now suppose that we are given $H$ as in the statement of the lemma.  We use $H$ to construct a global naive $\bbA^{1}$-homotopy as follows.   Let $Z$ be the union of the connected components of $H^{-1}( \{ 0 \} \times_{k} \bbA^{1}_{k})$ that are distinct from $\{ 0 \} \times_{k} \bbA^{1}_{k}$.  We have the identification
\begin{align*}
	\frac{\bbP^{1}_{k} \times_{k} \bbA^{1}_{k}}{\bbP^{1}_{k} \times_{k} \bbA^{1}_{k} - H^{-1}(0)}	=& 	\frac{\bbP^{1}_{k} \times_{k} \bbA^{1}_{k}}{\bbP^{1}_{k} \times_{k} \bbA^{1}_{k} - (\{x=0\} \cup Z)}	\\
		=& \frac{\bbP^{1}_{k} \times_{k} \bbA^{1}_{k}}{\bbP^{1}_{k} \times_{k} \bbA^{1}_{k}-\{ x=0\}} \vee \frac{\bbP^{1}_{k} \times_{k} \bbA^{1}_{k}}{\bbP^{1}_{k} \times_{k} \bbA^{1}_{k}-Z}
\end{align*}
Thus the morphism $\frac{\bbP^{1}_{k} \times_{k} \bbA^{1}_{k}}{\bbP^{1}_{k} \times_{k} \bbA^{1}_{k} - H^{-1}(0)} \to \frac{\bbP^{1}_{k}}{\bbP^{1}_{k}-0}$ induced by $H$ can be identified with a morphism $\frac{\bbP^{1}_{k} \times_{k} \bbA^{1}_{k}}{\bbP^{1}_{k} \times_{k} \bbA^{1}_{k}-\{ x=0\}} \vee \frac{\bbP^{1}_{k} \times_{k} \bbA^{1}_{k}}{\bbP^{1}_{k} \times_{k} \bbA^{1}_{k}-Z} \to \frac{\bbP^{1}_{k}}{\bbP^{1}_{k}-0}$.  Pre-composing this morphism with the natural morphism $\frac{\bbP^{1}_{k}}{\bbP^{1}_{k} - \{ 0 \}} \times_{k} \bbA^{1}_{k} \to \frac{\bbP^{1}_{k} \times_{k} \bbA^{1}_{k}}{\bbP^{1}_{k} \times_{k} \bbA^{1}_{k}-\{ x=0\}} \to \frac{\bbP^{1}_{k} \times_{k} \bbA^{1}_{k}}{\bbP^{1}_{k} \times_{k} \bbA^{1}_{k}-\{ x=0\}} \vee \frac{\bbP^{1}_{k} \times_{k} \bbA^{1}_{k}}{\bbP^{1}_{k} \times_{k} \bbA^{1}_{k}-Z}$, we obtain a naive $\bbA^1$-homotopy from the morphism induced by $f_0/g_0$ to the morphism induced by $f_1/g_1$.
\end{proof}

We first use these properties to compute the degree of a power map.
\begin{lm} \label{Lemma: LemmaComputePower}
	For $A \in k^{\ast}$, we have
	\begin{equation} \label{Eqn: FormulaForPower}
		\deg^{\bbA^{1}}(x^{\mu}/A) = \begin{cases} \langle A \rangle +  \frac{\mu-1}{2} \cdot \langle 1, -1 \rangle & \text{ $\mu$ odd;}\\
																	\frac{\mu}{2} \cdot \langle 1, -1 \rangle & \text{ $\mu$ even.} \end{cases}
	\end{equation}
\end{lm}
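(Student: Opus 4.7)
The plan is to reduce the computation to Cazanave's identity $\deg^{\bbA^{1}}(F) = [\operatorname{B\acute{e}z}(F)]$ in $\operatorname{GW}(k)$, recorded as \eqref{Eqn: A1DegreeEquality}, and then evaluate the right-hand side by an inspection of the B\'{e}zout matrix. Since the statement concerns only the stable class of $F$, no information about the discriminant factor from Cazanave's refinement is needed.

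First I would write out the defining identity \eqref{Eqn: CazanaveEqn} in the special case $f(x) = x^{\mu}$, $g(x) = 1$. It reduces to
\[
    x^{\mu} - y^{\mu} = (x-y)\bigl(x^{\mu-1} + x^{\mu-2}y + \dots + y^{\mu-1}\bigr),
\]
which shows immediately that $b_{i,j} = 1$ when $i+j = \mu+1$ and $b_{i,j}=0$ otherwise. Hence $\operatorname{B\acute{e}z}(F)$ is the anti-diagonal identity matrix of size $\mu$.

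The key observation is that this matrix is precisely the matrix appearing in Lemma~\ref{Lm: ComputeNewtonClass} in the special case $A(1) = A(2) = \cdots = A(\mu-1) = 0$ and $A(\mu) = 1$. Applying that lemma computes its class in $\operatorname{GW}(k)$ as $\langle 1\rangle + \tfrac{\mu-1}{2}\langle 1,-1\rangle$ when $\mu$ is odd and as $\tfrac{\mu}{2}\langle 1,-1\rangle$ when $\mu$ is even, which is exactly \eqref{Eqn: FormulaForPower}.

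No serious obstacle appears, since the entire argument is a direct matrix inspection followed by an appeal to Lemma~\ref{Lm: ComputeNewtonClass}; the only substantive input is the equality \eqref{Eqn: A1DegreeEquality}. In the proof of the Main Theorem that follows, this lemma will play the role of the basic local computation to which the local degree formula \eqref{Eqn: DegreeFormula} reduces the general case after passing to a splitting field of $f(x)$.
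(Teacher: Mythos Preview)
Your argument is correct as a proof of the lemma as stated: the B\'{e}zout matrix of $x^{\mu}/1$ is indeed the $\mu\times\mu$ anti-diagonal matrix of $1$'s, and Lemma~\ref{Lm: ComputeNewtonClass} with $A(\mu)=1$ then gives exactly \eqref{Eqn: FormulaForPower}. However, the paper proves this lemma by a completely different route that deliberately avoids \eqref{Eqn: A1DegreeEquality}. The paper argues by induction on $\mu$: it introduces the auxiliary map $G$ defined by $x^{\mu+1}+x^{\mu}$, observes that $G$ is naively $\bbA^{1}$-homotopic to $x^{\mu+1}$ via $x^{\mu+1}+tx^{\mu}$, and separately computes $\deg^{\bbA^{1}}(G)$ as the sum $\deg^{\bbA^{1}}_{0}(G)+\deg^{\bbA^{1}}_{-1}(G)$ using the local degree formula \eqref{Eqn: DegreeFormula} and a finite determinacy result from \cite{wickelgren15}, obtaining the recursion $\deg^{\bbA^{1}}(x^{\mu+1})=\deg^{\bbA^{1}}(x^{\mu})+\langle(-1)^{\mu+1}\rangle$.

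The reason the paper takes this longer path is architectural. Lemma~\ref{Lemma: LemmaComputePower} is introduced precisely to furnish, in the proof of the Main Theorem, an argument for the \emph{stable} statement that is independent of Cazanave's result. Your proof, whose ``only substantive input'' is \eqref{Eqn: A1DegreeEquality}, would make that alternative argument circular: one would be invoking Cazanave's theorem to establish the base case of a computation whose raison d'\^{e}tre is to reprove (the stable part of) Cazanave's theorem by classical means. So while your proof is the quickest way to check the formula, the paper's inductive proof is what actually buys the independence that the surrounding section is after.
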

\begin{proof}
	First, we prove the case where $A=1$ by  induction on $\mu$.  The result holds for $\mu=1$ by the construction of $\deg^{\bbA^{1}}$, so we assume the result holds for $\mu$ and then prove  it holds for $\mu+1$.  Consider the auxiliary function $x^{\mu+1}+x^{\mu} \colon \bbP^{1}_{k} \to \bbP^{1}_{k}$.  We compute the degree in two different ways.

First, the expression $x^{\mu+1} + t x^{\mu}$ defines a naive $\bbA^1$-homotopy from $x^{\mu+1} + x^{\mu}$ to $x^{\mu+1}$, so
\begin{equation}
	\deg^{\bbA^{1}}(x^{\mu+1} +  x^{\mu})  = \deg^{\bbA^{1}}(x^{\mu+1}).
\end{equation}

Second, by Formula~\eqref{Eqn: DegreeFormula}, we have that $\deg^{\bbA^1}(x^{\mu+1} +  x^{\mu}) = \deg^{\bbA^{1}}_{0}(x^{\mu+1} +  x^{\mu})+\deg^{\bbA^{1}}_{-1}(x^{\mu+1} + x^{\mu})$.  To compute the local degrees, observe that $(x,t) \mapsto x^{\mu}(1+t x)$
defines a naive local $\bbA^1$-homotopy in the sense of Lemma~\ref{Lemma: Local Invariance}, so 
\[
	\deg_{0}^{\bbA^1}(x^{\mu+1} +  x^{\mu}) = \deg^{\bbA^1}_{0}(x^\mu).
\]
A similar argument shows
\[
	\deg_{1}^{\bbA^1}(x^{\mu+1} +  x^{\mu}) =\deg^{\bbA^{1}}_{-1} ((-1)^{\mu} (x+1)).
\] 
We now compute
\begin{align*}
	\deg^{\bbA^{1}}(x^{\mu+1})	&= 	\deg^{\bbA^{1}} (x^{\mu+1} +  x^{\mu}) \\
							& = \deg^{\bbA^{1}}_0 (x^{\mu}) + \deg^{\bbA^{1}}_{-1} ((-1)^{\mu} (x+1)) \\
							& = \deg^{\bbA^{1}} (x^{\mu}) + \langle(-1)^{\mu}\rangle,
\end{align*}
completing  induction. 

We complete the proof by observing that $\deg^{\bbA^1}(x^{\mu}/A) = \deg^{\bbA^1}(x/A) \cdot \deg^{\bbA^1}(x^\mu)$ because $\deg^{\bbA^1}$ transforms composition into multiplication.

\end{proof}

The next lemma identifies the element of $\operatorname{GW}(k)$ from the last lemma with an explicit matrix.

\begin{lm} \label{Lm: ComputeNewtonClass}
	If $A(1), A(2), \dots, A(\mu) \in k$ are scalars with $A(\mu)$ nonzero, then the nondegenerate symmetric matrix
	\[
		 \begin{pmatrix}
			A(1) 				& A(2) 		& \cdots 			&	A(\mu-1)	& A(\mu) \\
			A( 2) 			& A(3)		 & \cdots 			&	A(\mu)	& 0 \\
			\vdots  			& \vdots 	 	& \ddots 			& \vdots		& \vdots   \\
			A(\mu-1)			& A(\mu) 	& \cdots			&	0		& 0 \\
			A(\mu) 			& 0 			& \cdots			&	0		& 0
 \end{pmatrix}
	\]
	represents the Grothendieck--Witt class
	\[
		w = \begin{cases} \langle A(\mu) \rangle +  \frac{\mu-1}{2} \cdot \langle 1, -1 \rangle & \text{ $\mu$ odd;}\\
																	\frac{\mu}{2} \cdot \langle 1, -1 \rangle & \text{ $\mu$ even.} \end{cases}
	\]
\end{lm}
\begin{proof}
	Let $x_1, \dots, x_{\mu}$ denote the basis dual to the standard basis on $k^{\oplus \mu}$, so the bilinear form $\beta$ defined by the given matrix is $\sum A(i+j-1) x_i \otimes x_j$. Rewrite the bilinear form as
	\begin{gather} \label{Eqn: NewtonClassBasis}
			x_{1} \otimes \Psi_{1}+ \Psi_{1} \otimes x_{1}  + x_{2} \otimes \psi_{2} +\psi_{2} \otimes x_{2} + \dots x_{\mu/2} \otimes  \Psi_{\mu/2}+ \Psi_{\mu/2} \otimes x_{\mu/2} 		\text{ if $\mu$ even;} \\
			x_{1} \otimes \Psi_{1}+ \Psi_{1} \otimes x_{1}  + \dots x_{(\mu-1)/2} \otimes  \Psi_{(\mu-1)/2}+ \Psi_{(\mu-1)/2} \otimes x_{(\mu-1)/2} + A(\mu) x_{(\mu+1)/2} \otimes x_{(\mu-3)/2}  		\text{ if $\mu$ odd}  \notag
	\end{gather}
	for $\Psi_1, \Psi_2, \dots$ the linear functions
	\begin{align*}
		\Psi_1 =& \frac{A(1)}{2} x_1 + A(2) x_2 +A(3) x_{3} + \dots + A(\mu-2) x_{\mu-2}+A(\mu-1) x_{\mu-1}+A(\mu) x_{\mu}, \\
		\Psi_2 =& \phantom{A(1) x_1 + }\frac{A(3)}{2} x_2 +A(4) x_{3} + \dots + A(\mu-1) x_{\mu-2}+A(\mu) x_{\mu-1},\phantom{+A(\mu) x_{\mu},} \\
		\Psi_3 =& \phantom{A(1) x_1 + A(2) x_2 +}\frac{A(5)}{2} x_{3} + \dots + A(\mu) x_{\mu-2}, \phantom{+A(\mu-1) x_{\mu-1}+A(\mu) x_{\mu},} \\
		\dots.
	\end{align*}
	The elements $x_1, \Psi_1, x_2, \Psi_2, \dots$ form a dual basis, so \eqref{Eqn: NewtonClassBasis} shows that  $\beta$ is the orthogonal sum of the  hyperbolic planes $x_{1} \otimes \Psi_{1}+ \Psi_{1} \otimes x_{1}, x_{2} \otimes \Psi_{2}+ \Psi_{2} \otimes x_{2}, \dots$ and, when $\mu$ is odd, the $1$-dimensional space $A(\mu) x_{(\mu-1)/2} \otimes x_{(\mu-1)/2}$.
\end{proof}

We now introduce an auxiliary matrix, the Newton matrix $\operatorname{New}(f/g)$, which will be used in the proof.
\begin{df}

	Given a root $r \in k$ of  $f$ of multiplicity $\mu_0$, let 
\begin{equation} \label{Eqn: PartialFractions}
	g(x)/f(x) = \frac{A_{r}(\mu_{0})}{(x-r)^{\mu_{0}}}+\frac{A_{r}(\mu_{0}-1)}{(x-r)^{\mu_{0}-1}}+\dots+ \frac{A_{r}(2)}{(x-r)^{2}}+\frac{A_{r}(1)}{(x-r)}+\text{higher order terms}
\end{equation}
be a partial fractions decomposition.  Define the \textbf{local Newton matrix}
\begin{equation} \label{Eqn: NewtonBlock}
	\operatorname{New}(f/g, r) :=  \begin{pmatrix}
  A_{r}(1) 				& A_{r}(2) 				& \cdots 			&	A_{r}(\mu_{0}-1)		& A_{r}(\mu_{0}) \\
  A_{r}( 2) 				& A_{r}(3)				& \cdots 			&	A_{r}(\mu_{0})			& 0 \\
  \vdots  				& \vdots  				& \ddots 			& \vdots  					& \vdots \\
  A_{r}(\mu_{0}-1)		& A_{r}(\mu_{0}) 		& \cdots			&	0					& 0 \\
  A_{r}(\mu_{0}) 		& 0 					& \cdots			& 	0					& 0
 \end{pmatrix}.
\end{equation}
If $k$ contains all the roots $\{ r_1, \dots, r_m\}$ of $f(x)$, then we define the \textbf{Newton matrix}
\[
	\operatorname{New}(f/g) :=  \begin{pmatrix}
	\operatorname{New}(f/g, r_1) 		& 0 						& \cdots 	&	0		& 0\\
	0 							& \operatorname{New}(f/g, r_2) 	& \cdots 	&	0		& 0 \\
	\vdots  						& \vdots 					& \ddots 	& \vdots		& \vdots   \\
	0							& 0 & \cdots						&	\operatorname{New}(f/g, r_{m-1})		& 0 \\
	0 							& 0 & \cdots 						&	0	& \operatorname{New}(f/g, r_m) 
 \end{pmatrix}.
\]
\end{df}

The previous two lemmas shows that $\deg^{\bbA^1}(f/g)$ equals the class of $\operatorname{New}(f/g)$ when $f/g = x^{\mu}/A$.  The following lemma establishes equality in greater generality.

\begin{co} \label{Corollary: NewtonIsDegreeLocal}
	If $r \in k$ is a root of $f$, then $\operatorname{New}(f/g,r)$ represents $\deg_{r}^{\bbA^1}(f/g)$. 
\end{co}
\begin{proof}
	Say that the multiplicity of $r \in k$ in $f$ is $\mu_0$ and $A$ equals $g(0)$.  Write  $f(x) = x^{\mu_{0}} f_0(x)$ with $f_{0}(0) \ne 0$ and $g(x) = x g_{0}(x)+A$.  The expression 
	\[
		H(x, t) = (x^{\mu}+t f_{0}(x))/( t x g_{0}(x) + A)
	\]
	defines a naive local $\bbA^1$-homotopy (in the sense of Lemma~\ref{Lemma: Local Invariance}), showing $\deg^{\bbA^1}_{0}(f/g) = \deg^{\bbA^1}_{0}(x^{\mu_0}/A)$.  We thus have reduced the claim to the special case $f/g = x^{\mu_0}/A$,  which is  Lemmas~\ref{Lemma: LemmaComputePower}  and \ref{Lm: ComputeNewtonClass}.
\end{proof}

\begin{co} \label{Corollary: NewtonIsDegree}
	If $k$ contains all the roots of $f(x)$, then $\operatorname{New}(f/g)$ represents the class of $\deg^{\bbA^1}(f/g)$. 
\end{co}
\begin{proof}
	This is immediate from Corollary~\ref{Corollary: NewtonIsDegreeLocal} because $\deg^{\bbA^1}(f/g)$ is the sum of the local degrees $\deg^{\bbA^1}_{r_i}(f/g)$ by Formula~\eqref{Eqn: DegreeFormula} and $\operatorname{New}(f/g)$ is the sum of the terms $\operatorname{New}(f/g,r_i)$ by definition.
\end{proof}

\begin{tm} \label{Thm: NewCazanave}
	The matrix $S(f/g)$ represents  $\deg^{\bbA^1}(f/g) \in \operatorname{GW}(k)$.
\end{tm}
\begin{proof}
	First, assume $k$ contains all  the roots of $f(x)$, so $\operatorname{New}(f/g)$ is defined over $k$.  We will show that $S(f/g)$ is conjugate to $\operatorname{New}(f/g)$ by manipulating the bilinear polynomial $\sum s_{i+j-1} a_i b_j$ defined by $S(f/g)$.  Consider the expression 
	\begin{equation} \label{Eqn: RationalTimesTheta}
		g(x)/f(x) \cdot \Theta(a; x) \Theta(b; x)
	\end{equation}
		 for $\Theta$ the polynomial 
	\[
		\Theta(a; x) = a_0 + a_1 x + a_2 x^2 + \dots + a_{\mu-1} x^{\mu-1}.
	\]
	The coefficient of $1/x$ in \eqref{Eqn: RationalTimesTheta} is $\sum s_{i+j+1} a_i b_j$, as we see by substituting $s_{1}/x+s_{2}/x^2 + \dots$ for $g(x)/f(x)$ and multiplying.
	
	Alternatively the coefficient of $1/x$ is the sum of the residues at the roots of $f$.  Here we take the residue at a root $r$ algebraically as the coefficient of $1/(x-r)$ in a series expansion. If $r$ is a root of $f(x)$ with multiplicity $\mu_0$, write
	\begin{align*}
		\Theta(a; x) =& a_0 +a_1 (x-r+r)+a_2 (x-r+r)^2+ \dots a_{\mu-1} (x-r+r)^{\mu-1} \\
				=& (a_0+a_1 r+a_2 r^2+\dots+a_{\mu-1} r^{\mu-1}) + (a_1+2 r a_2+\dots) (x-r)+\dots + a_{\mu-1} (x-r)^{\mu-1} \\
				=& \Theta_{0}(a;r)+ \Theta_{1}(a;r) (x-r) + \dots+ \Theta_{\mu-1}(a;r) (x-r)^{\mu-1}
	\end{align*}
	for some explicit polynomials $\Theta_0, \Theta_1, \dots, \Theta_{\mu-1}$ that are linear in the $a_{i}$'s. Substituting these expressions and $g(x)/f(x) = \frac{A_{r}(\mu_{0})}{(x-r)^{\mu_{0}}}+\frac{A_{r}(\mu_{0}-1)}{(x-r)^{\mu_{0}-1}}+\dots$ into \eqref{Eqn: RationalTimesTheta}, we get that the residue at $x=r$ is 
	\[
		\sum A_{r}(i+j+1) \Theta_{i}(a; r) \Theta_{j}(b; r).
	\]
	Summing over all roots, we get that the coefficient of $1/x$ is 
	\[	
		\sum A_{r_{1}}(i+j+1) \Theta_{i}(a; r_1) \Theta_{j}(b; r_1) + 	\dots+	\sum A_{r_{m}}(i+j+1) \Theta_{i}(a; r_m) \Theta_{j}(b; r_m).
	\]
	As a bilinear polynomial in the $\Theta_i$'s, this is the bilinear polynomial associated to $\operatorname{New}(f/g)$.  Thus if $M$ is the change-of-basis matrix relating the $a_i$'s to the $\Theta_i(a; r_j)$'s, then 
	\begin{equation} \label{Eqn: StoNewton}
		S(f/g) = M \cdot \operatorname{New}(f/g) \cdot M^{\operatorname{T}}.
	\end{equation}
	The matrix $M$ is invertible ($M$ is the confluence Vandermonde matrix), so we conclude from Corollary~\ref{Corollary: NewtonIsDegree} that $S(f/g)$ represents $\deg^{\bbA^1}(f/g)$.

	When $f(x)$ is arbitrary, we extend the above argument using the theory of descent (as in e.g.~\cite[Chapter~III]{knus}) as follows.  Fix a splitting field $L$ for $f(x)$.  We can assume that $L/k$ is Galois because the natural map from $\operatorname{GW}(k)$ to the Grothendieck--Witt group of the separable closure is injective (recall $\operatorname{char} k \ne 2$). Denote the irreducible factors of $f(x)$ by $\pi_{1}(x), \dots, \pi_{m}(x)$.  For each factor $\pi(x)$, pick an embedding of the residue field $k[x]/\pi(x)$ into $L$ and let $r_i \in L$ denote the image of $x$.  By Formula~\eqref{Eqn: DegreeFormula} and Corollary~\ref{Corollary: NewtonIsDegreeLocal}, we have
	\begin{equation}  \label{Eqn: SumOfTraces}
		\deg^{\bbA^1}(f/g) = \operatorname{Tr}_{k[r_1]/k}( \operatorname{New}( f/g \otimes k[r_1], r_1)) + \dots + \operatorname{Tr}_{k[r_m]/k}( \operatorname{New}( f/g \otimes k[r_m], r_m)). 
	\end{equation}

	To show that  \eqref{Eqn: SumOfTraces} and $S(f/g)$ define isomorphic bilinear spaces, it is enough to extend scalars to $L$ and construct an isomorphism over $L$ that respects the natural descent data.  The space obtained from \eqref{Eqn: SumOfTraces}  is the  space defined by 
	\begin{gather*}
		\left(\oplus_{\sigma \in \operatorname{Gal}(L/k[r_1])} \operatorname{New}(f/g \otimes L, \sigma(r_1))\right) \oplus \dots \oplus \left(\oplus_{\sigma \in \operatorname{Gal}(L/k[r_m])} \operatorname{New}(f/g \otimes L, \sigma(r_m)) \right)	\\
		=  \operatorname{New}(f/g \otimes L).
	\end{gather*}
	by \cite[Theorem~6.1]{lam05}. The descent data on this space is the descent data associated to the permutation action of $\operatorname{Gal}(L/k)$ on the roots of $f(x)$, and the isomorphism defined by the matrix $M$ from \eqref{Eqn: StoNewton} transforms this descent data into the tautological descent data on the bilinear space defined by $S(f/g)$.
\end{proof}

\begin{co} \label{Corollary: BezoutToS}
	The matrix $\operatorname{B\acute{e}z}(f/g)$ represents $\deg^{\bbA^1}(f/g)$.
\end{co}
\begin{proof}
	It is enough to show that $\operatorname{B\acute{e}z}(f/g)$ is congruent to $S(f/g)$.  Writing  $f(x) = x^{\mu} + a_{1} x^{\mu-1}+\dots+a_{\mu}$, we compute
	\begin{align}
		\frac{f(x) g(y) - f(y) g(x)}{x-y}	=& f(x) f(y) \frac{\frac{g(y)}{f(y)} - \frac{g(x)}{f(x)}}{x-y}	\notag \\
								=& f(x) f(y)  \frac{\sum_{i=1}^{\infty} s_{i} (y^{-i}-x^{-i})}{x-y}				\notag \\
								=& f(x) f(y)  \frac{\sum_{i=1}^{\infty} s_{i} (x^{i}-y^{i}) x^{-i} y^{-i}}{x-y}				\notag \\
								=& f(x) f(y) \sum^{\infty}_{i,j=1} s_{i+j-1} x^{-i} y^{-j}				\notag  \\
								=& \sum_{i, j=1}^{\infty} s_{i+j-1} ( x^{\mu-i} + a_{1} x^{\mu-i-1} + \dots + a_{\mu} x^{-i}) ( y^{\mu-j} + a_{1} y^{\mu-j-1} + \dots + a_{\mu} y^{-j}).\label{Eqn: BezoutExpression}
	\end{align}
 The above expression equals $\sum b_{i, j} x^{i-1} y^{j-1}$, so the terms in  \eqref{Eqn: BezoutExpression} where $x$ or $y$ appears with negative degree must cancel out, and we deduce
	\[
		\sum_{i,j=1}^{\mu} b_{i, j} x^{i-1} y^{j-1} = \sum_{i, j=1}^{\mu} s_{i+j-1} ( x^{\mu-i} + a_{1} x^{\mu-i-1} + \dots + a_{\mu-i} ) ( y^{\mu-j} + a_{1} y^{\mu-j-1} + \dots + a_{\mu-j}).
	\]
	This shows that  the relevant change-of-basis matrix conjugates $\operatorname{B\acute{e}z}(f/g)$ to $S(f/g)$.
\end{proof}

Let us compare the argument just given with  Kre{\u\i}n--Na{\u\i}mark's treatment of Hurwitz's result in \cite{krein81}.  We deduced the result that $\operatorname{B\acute{e}z}(f/g)$ represents $\deg^{\bbA^1}(f/g)$ (Corollary~\ref{Corollary: BezoutToS}) from the analogous result about $S(f/g)$ (Theorem~\ref{Thm: NewCazanave}) in the same way that Kre{\u\i}n--Na{\u\i}mark do on \cite[page~277]{krein81}.  Our proof of the result about $S(f/g)$ is similar to  the argument on \cite[pages~280--282]{krein81} but with two significant differences.  First, both we and Kre{\u\i}n--Na{\u\i}mark compute the class of $S(f/g)$ by passing to a splitting field $L$ for $f(x)$ and working with $\operatorname{New}(f/g \otimes L)$, but the details are different.  Kre{\u\i}n--Na{\u\i}mark only consider $k=\bbR$, and they compute the signature of $S(f/g)$ from $\operatorname{New}(f/g \otimes L)$ by observing that a complex conjugate pair of roots correspond to a summand of $S(f/g)$ with signature zero.  We allow $k$ to be arbitrary (with $\operatorname{char} k \ne 2$), and in contrast to the case $k=\bbR$, a Galois orbit of roots can contribute to the class of $S(f/g)$ in a complicated way, so instead of directly computing  we use descent theory.

The second difference occurs in our proof of Corollary~\ref{Corollary: NewtonIsDegreeLocal}, which identifies the local degree $\deg^{\bbA^1}_{r}(f/g)$ with the class of $\operatorname{New}(f/g, r)$.  The corollary is deduced from Lemmas~\ref{Lemma: LemmaComputePower} and \ref{Lm: ComputeNewtonClass}.  Lemma~\ref{Lm: ComputeNewtonClass} is the computation of  the class of  $\operatorname{New}(f/g, r)$, and the computation is the same as the one appearing on \cite[page~281]{krein81}.  Lemma~\ref{Lemma: LemmaComputePower}, however, does not have a direct analogue in \cite{krein81}.  That lemma computes the local degree as the  expression  in Lemma~\ref{Lm: ComputeNewtonClass}, and for Kre{\u\i}n--Na{\u\i}mark, the analogous fact does not need a proof since it is their definition of the local degree (or rather local Cauchy index; see Section~\ref{Section: References} below).

The proof just given  provides a second proof of Cazanave's result that $\operatorname{B\acute{e}z}(f/g)$ represents $\deg^{\bbA^1}(f/g)$.  Cazanave in fact proves the stronger result that  $\operatorname{B\acute{e}z}(f/g)$ represents the unstable homotopy class of $f/g$, and our proof does not yield this stronger result.  The reason for this is clearly demonstrated by the special case where $f(x)$ has $\mu$ distinct roots defined over $k$.  We deduced the result about $\operatorname{B\acute{e}z}(f/g)$ from the analogous result about $\operatorname{New}(f/g)$, and $\operatorname{New}(f/g)$ does not represent the unstable homotopy class of $f/g$.  Indeed, the unstable homotopy class is determined by  $\deg^{\bbA^1}(f/g)$ and a scalar $d(f/g) \in k^{\ast}$ that represents the discriminant of $\deg^{\bbA^{1}}(f/g)$.  The Newton matrix is a diagonal matrix with diagonal entires $g(r_1)/f'(r_1), \dots, g(r_\mu) /f'(r_\mu)$, so 
\begin{align}
	\det( \operatorname{New}(f/g)) =& g(r_1)/f'(r_1) \cdots g(r_{\mu})/f'(r_\mu) \text{ but }  \label{Eqn: FormulaForDet} \\
	\det( \operatorname{B\acute{e}z}(f/g)) =& (-1)^{\mu(\mu-1)/2} \operatorname{Resultant}(f,g) \notag \\
									=& \operatorname{Disc}(f) \cdot g(r_1)/f'(r_1) \cdots g(r_{\mu})/f'(r_{\mu}).  \notag
\end{align}
Here $\operatorname{Resultant}$ is the resultant, and $\operatorname{Disc}$ the discriminant.

Equation~\eqref{Eqn: FormulaForDet} shows that, unlike $\deg^{\bbA^1}(f/g)$,  $d(f/g)$ is not determined by the derivatives of $f(x)/g(x)$ at the roots.  For example, both $f_1(x)/g_{1}(x) = x^2-x$  and $f_{2}(x)/g_{2}(x) = (x^2-1)/2$ have two simple zeros at which the values of the derivative are $+1$ and $-1$ respectively, but the functions are not unstably homotopic since $d(f_1/g_1) = -1$ but   $d(f_2/g_2)=-4$. This should not be surprising as the formula relating  the global degree to a sum of local degrees is proven using stable techniques.

\subsection{References} \label{Section: References}
The authors used Kre{\u\i}n--Na{\u\i}mark's survey \cite{krein81} as the primarily source on Hurwitz's theorem.  The proof in loc.~cit.~is essentially the same as Hurwitz's proof in  \cite[Section~3]{hurwitz95}.  Other treatments of the result include \cite[page~210, Theorem~9]{gantmacher64}, \cite[Theorem~10.6.5]{rahman02}, \cite[Theorem~9.4]{basu06}, and \cite[Theorem~8.59]{fuhrmann12}.  All of these references do not discuss the topological degree and instead discuss the Cauchy index, an equivalent invariant.  The local Cauchy index $\operatorname{ind}_{r}(f/g)$ at a pole $r \in \bbR$ of $f/g$ (i.e.~at a root of $g$) is
\[
	\operatorname{ind}_{r}(f/g) := \begin{cases}
		+1 & \text{if $f/g$ jumps from $-\infty$ to $+\infty$ at $r$;}\\
		-1 & \text{if $f/g$ jumps from $+\infty$ to $-\infty$ at $r$;}\\
		0	& \text{otherwise.}
	\end{cases}
\]
The global Cauchy index $\operatorname{ind}(f/g)$ is the sum of the local Cauchy indices.  The local Cauchy index of $f/g$ at $r$ equals the local topological of the \emph{reciprocal} function $g/f$ and similarly with the global degree.

\section{Connection with work of Eisenbud--Levine and Khimshiashvili} \label{Section: Connection}
Here we explain how Cazanave's description of  $\deg^{\bbA^1}(f/g)$ as the B\'{e}zout matrix is related to the signature formula of Eisenbud--Levine and Khimshiashvili, as well as results of Palamodov and the present authors.

The signature formula computes the local topological degree of the germ $h_0  \colon (\bbR^{n}, 0) \to (\bbR^{n}, 0)$ of a $C^{\infty}$-function as the signature of a bilinear form.  The formula applies when $h_0$ has the property that the local algebra 
\begin{equation} \label{Eqn: DefOfGermAlg}
	Q_{0}(h_0)  := C^{\infty}_{0}(\bbR^{n})/(h_0).
\end{equation}
has finite length.  Here $C^{\infty}_{0}(\bbR)$ is the ring of germs of smooth real-valued functions on $\bbR^n$ based at $0$ and $(h_0)$ is the ideal generated by the components of $h_0$.  

The bilinear form appearing in the signature formula is defined in terms of the Jacobian element  $J := \det( \frac{\partial f_{i}}{\partial x_{j}} ) \in Q_{0}(h_0)$.  The formula states that if  $\eta \colon Q_{0}(h_0) \to \bbR$ is a $\bbR$-linear function satisfying $\eta(J) > 0$, then the symmetric bilinear pairing $\beta_{0}(\eta)$ on $Q_{0}(h_0)$ defined by 
\[
	\beta_{0}(\eta)( a, b) = \eta( a b)
\]
satisfies 
\begin{equation} \label{Eqn: Signature}
	\deg^{\bbR}_{0}(h_0) = \text{signature of $\beta_{0}(\eta)$.} 
\end{equation}
This is \cite[Theorem~1.2]{eisenbud77} and \cite{khimshiashvili}; see \cite{Khimshiashvili01} and \cite[Chapter~5]{arnold12}  for recent expositions. 

 Earlier Palamodov \cite[Corollary~4]{palamodov} proved a parallel result  that computes the topological degree $\deg^{\bbC}(h_0)$ of the complexification $h_0 \otimes \bbC \colon \bbC^{n} \to \bbC^{n}$ as
\begin{equation} \label{Eqn: Rank}
	\deg^{\bbC}(h_0) = \text{rank of $\beta_{0}(\eta)$} 
\end{equation}
under the assumption that $h_0$ is analytic (so a complexification exists).

In the special case where $n=1$ and $h_0$ is the germ of a rational function $f/g$, the signature formula is closely related to  Hurwitz's theorem.  For such a $h_0$, the natural map  $\bbR[x, 1/g] \to C^{\infty}_{0}(\bbR)$ induces an isomorphism
\[
	Q_{0}(f/g) \to Q_{0}(h_0)
\]
between $Q_{0}(h_0)$ and the localization $Q_{0}(f/g)$ of the algebra $Q(f/g) := \bbR[x, g^{-1}]/ (f g^{-1})$ at the maximal ideal $(x)$ of the origin, by the finiteness of $Q_{0}(h_0)$.

The algebra $Q(f/g)$ admits a distinguished functional $\operatorname{Res} \colon  Q(f/g) \to \bbR$ called the residue functional (or generalized trace) that is constructed using local duality.  The functional is related to the residues which are defined in complex analysis.  Extending scalars to $\bbC$, the residue functional satisfies
\[
	\operatorname{Res}(a) = \sum_{f(r)=0} \operatorname{Res}_{z=r}\left( g(z)/f(z) \cdot a(z)\right),
\]
where $\operatorname{Res}_{z=r}(p(z))$ is the residue of $p(z)$ at $z=r$ in the sense of complex analysis.

The matrix $S(f/g)$ is nothing other than a Gram matrix for $\beta(\operatorname{Res})$.  This should not be too surprising.  We essentially showed this in the proof of Theorem~\ref{Thm: NewCazanave}, where we computed the bilinear polynomial associated to  $S(f/g)$ as the coefficient of $1/x$ in $g(x)/f(x) \cdot \Theta(a; x) \Theta(b; x)$, i.e.~as the sum of the residues of $g(x)/f(x) \cdot \Theta(a; x) \Theta(b; x)$.   The identification of $S(f/g)$ as a Gram matrix is naturally derived from general results of Scheja--Storch, as we now explain.

Scheja--Storch described $\operatorname{Res}$ more generally over an arbitrary field using commutative algebra  in \cite{scheja}.  In the present context, the key result is their description of the bilinear pairing $\beta(\operatorname{Res})$ in terms of the element
\[
	\Delta := \frac{f/g(x) - f/g(y)}{x-y}.
\]

They show that if $v_1, \dots, v_\mu$ is a $k$-basis for $Q(f/g)$ and
\begin{gather}
	\Delta = \sum b_{i, j} v_{i}(x) v_{j}(y) \text{ modulo $f(x)/g(x)$ and $f(y)/g(y)$, then  }\\	
	\begin{pmatrix} b_{i, j} \end{pmatrix}  = \text{ the Gram matrix of $\beta$ with respect to the basis dual to $v_1, \dots, v_\mu$.} \label{Eqn: GramMatrix}
\end{gather}
This is not explicitly stated by Scheja--Storch, but it can be derived from  the formula on \cite[page~182, bottom]{scheja}.  That formula states that if $v_{1}^{\ast}, \dots, v_{\mu}^{*}$  is the basis dual to $v_1, \dots, v_{\mu}$ with respect to $\beta(\operatorname{Res})$, then $\Delta = v_1(x) v_{1}^{*}(y) + \dots + v_{\mu}(x) v_{\mu}^{*}(y)$.  The two bases are related by $v_{i} = \sum b_{i, j} v_{j}^{*}$, and the  equality \eqref{Eqn: GramMatrix} follows from substituting  $\sum b_{i, j} v_{j}^{*}(x)$ for $v_{i}(x)$ in $\sum v_i(x) v_i^{*}(y)$.  (Strictly speaking Scheja--Storch only work with $Q(f/g)$ when $g=1$ because they work with quotients of $k[x]$, but their argument applies to the localization $k[x, 1/g]$ with only notational changes.)

The matrices $\operatorname{B\acute{e}z}(f/g)$, $S(f/g)$, and $\operatorname{New}(f/g)$ are nothing other than Gram matrices for $\beta$ with respect to distinguished bases that we now define.  

\begin{df}\label{Definition: Bases}
	Define the \textbf{monomial basis} of $Q(f/g)$ to be $1/g, x/g, \dots, x^{\mu-1}/g$.
	
	Write $f(x) = x^{\mu} + a_{1} x^{\mu-1} + \dots a_{\mu-1} x+a_{\mu}$ and define the \textbf{Horner basis} of $Q(f/g)$ to be $( x^{\mu-1} + a_{1} x^{\mu-2} + \dots + a_{\mu-1} )/g$, $( x^{\mu-2} + a_{1} x^{\mu-3} + \dots + a_{\mu-2} )/g$, ..., $(x+a_{1})/g$, $1/g$.
	
	If $f$ factors into linear factors $f(x) = (x-r_{1})^{\mu_1} \dots (x-r_{m})^{\mu_m}$ over $k$, then the define the \textbf{Newton basis}  to be  $\frac{f(x)}{(x-r_{1}) g(x)}$, $\frac{f(x)}{(x-r_{1})^{2} g(x)}$, ..., $\frac{f(x)}{(x-r_{1})^{\mu_{1} -1} g(x)}$, $\frac{f(x)}{(x-r_{2}) g(x)}$, \dots, $\frac{f(x)}{(x-r_{m})^{\mu_{m}-1} g(x)}$.
\end{df}

Table~\ref{Table: GramMatrices}  identifies $\operatorname{B\acute{e}z}(f/g)$, $S(f/g)$, and $\operatorname{New}(f/g)$ as Gram matrices.  These identifications follow from our computations in Section~\ref{Section: Classical Proof}.  We get the identification of the  B\'{e}zout matrix  by dividing \eqref{Eqn: DefOfBezout} by $g(x) g(y)$.  If we instead divide  \eqref{Eqn: BezoutExpression}, we get the identification of $S(f/g)$.  Replacing the use of  $g/f = \sum s_{i}/x^i$ with  $g/f = \sum A_{r_i}(j)/(x-r_{i})^{j}$ in \eqref{Eqn: BezoutExpression} identifies $\operatorname{New}(f/g)$.

\begin{table}%[htdp]
\caption{Gram Matrices}
\begin{center}
\begin{tabular}{ l l}
Gram matrix						&		Dual basis \\
\hline \hline 
$\operatorname{B\acute{e}z}(f/g)$		&		Monomial basis  \\
$S(f/g)$ 							& 		Horner basis  \\
$\operatorname{New}(f/g)$				&		Newton basis \\
\end{tabular}
\end{center}
\label{Table: GramMatrices}
\end{table}

We point out that this identification of $\operatorname{B\acute{e}z}(f/g)$ as a Gram matrix is different from the identification that appears in \cite{cazanave}.  The relevant text is Lemma~C.2 of the preprint version (but not in the published version) of loc.~cit.~that is availible on the online repository the arXiv as arXiv:0912.2227v2.  There Cazanave identifies the inverse matrix $\operatorname{B\acute{e}z}(f/g)^{-1}$, and hence $\operatorname{B\acute{e}z}(f/g)$, as a Gram matrix in an important special case, namely the case where  $f(x)$ and $g(x)$ have indeterminant coefficients and $k$ is a suitable algebraic closure.  His discussion, however, applies more generally to the case where  $f(x)$ has only simple roots (so $Q(f/g)$ is \'{e}tale).   He shows that $\operatorname{B\acute{e}z}(f/g)^{-1}$ is a Gram matrix for the bilinear pairing  $\beta(\phi)$ on $Q(f/g)$ defined by functional 
\begin{equation}	\label{Eqn: CazaFunctional}
	\phi(a) = \operatorname{Tr}\left(\frac{a}{f'(x) g(x)}\right).
\end{equation}
Here $\operatorname{Tr} \colon Q(f/g) \to k$ is the algebra trace.  Observe that $f'(x)$ is a unit in $Q(f/g)$ because we assumed that $f(x)$ has distinct roots, so \eqref{Eqn: CazaFunctional} is well-defined.

The functional $\phi$ is not equal to $\operatorname{Res}$.  Indeed, when $f(x)$ has simple roots, Scheja--Storch show 
\[
	\operatorname{Res}(a) = \operatorname{Tr} \left( \frac{a g(x)}{f'(x)} \right)
\]
This is  \cite[(4.2)~Satz]{scheja} (specifically, it is equivalent to the equality $\operatorname{Sp} = J \cdot \eta$ in loc.~cit.).  The two functionals do, however, define isomorphic symmetric bilinear forms since multiplication by $1/g(x)$ transforms $\beta(\operatorname{Res})$ into $\beta(\phi)$.

Having explained the connection between the residue pairing $\beta(\operatorname{Res})$ and Hurwitz's theorem, we now explain the relation to the signature formula. Since $Q(f/g)$ is an artin algebra, $Q_{0}(f/g)$ is a direct summand of $Q(f/g)$.  Scheja--Storch show that the restriction $\operatorname{Res}_{0}$ of the residue functional satisfies $\operatorname{Res}_{0}(J) = \operatorname{length} Q_{0}(f/g)$, so in particular, the restriction $\beta_{0}(\operatorname{Res}_{0})$ of $\beta(\operatorname{Res})$ to $Q_{0}(f/g)$ satisfies the hypothesis of the signature formula.  We conclude that 
\[
	\deg^{\bbR}_{0}(f/g) = \text{ the signature of $\beta_0(\operatorname{Res}_{0})$.}
\]
In this manner, applied to a rational function $f/g$, the signature formula is the local analogue of Hurwitz's theorem.

Cazanave's theorem is an enrichment of Hurwitz's theorem, and it implies the following analogous enrichment of the signature formula:
\begin{equation} \label{Eqn: LocalCazanave}
	\deg^{\bbA^1}_{0}(f/g) = \beta_0(\operatorname{Res}_{0}) \text{ in $\operatorname{GW}(k)$.}
\end{equation}
(This is Corollary~\ref{Corollary: NewtonIsDegreeLocal}.)

Equality \eqref{Eqn: LocalCazanave} answers a question of Eisenbud for polynomial maps of $1$ variable.  Eisenbud, in a survey article on his work with Levine,   observed that $\beta_0(\operatorname{Res}_{0})$ is defined for a polynomial map $f \colon \bbA^{n}_{k} \to \bbA^{n}_{k}$ with an isolated zero at the origin when $k$ is an arbitrary field of odd characteristic.  He then  proposed $\beta_0$ as the definition of the degree and asked if this degree has an interpretation or usefulness, say in cohomology theory \cite[Some remaining questions (3)]{eisenbud78}.  

In this paper we have shown that, in odd characteristic, the stable isomorphism class of $\beta_0(\operatorname{Res}_{0})$ is the local degree in $\bbA^1$-homotopy theory when $f$ is a polynomial map of $1$ variable.  In the companion paper \cite{wickelgren16}, the present authors  answer Eisenbud's question in full generality: the main theorem of that paper (Theorem~\ref{Thm: CompanionMainTheorem}) states that \eqref{Eqn: LocalCazanave} remains valid when $f/g$ is replaced by a polynomial map $f \colon \bbA^{n}_{k} \to \bbA^{n}_{k}$ in any number of variables that has  an isolated zero at the origin.

\section{Acknowledgements}
TO BE ADDED AFTER THE REFEREEING PROCESS. %thank candace for proof-reading.

Jesse Leo Kass was partially sponsored by the National Security Agency under Grant Number H98230-15-1-0264 and  the Simons Foundation under Award Number 429929.  The United States Government is authorized to reproduce and distribute reprints notwithstanding any copyright notation herein. This manuscript is submitted for publication with the understanding that the United States Government is authorized to reproduce and distribute reprints.

Kirsten Wickelgren was partially supported by National Science Foundation Awards DMS-1406380 and DMS-1552730.

\providecommand{\bysame}{\leavevmode\hbox to3em{\hrulefill}\thinspace}
\providecommand{\MR}{\relax\ifhmode\unskip\space\fi MR }

\providecommand{\MRhref}[2]{%
  \href{http://www.ams.org/mathscinet-getitem?mr=#1}{#2}
}
\providecommand{\href}[2]{#2}

\end{document}